\theoremstyle{plain}
\newtheorem*{maintheo*}{Main Theorem}
\newtheorem{theorem}{Theorem}[section]
\newtheorem{proposition}[theorem]{Proposition}
\newtheorem{corollary}[theorem]{Corollary}
\newtheorem{lemma}[theorem]{Lemma}
\theoremstyle{definition}
\newtheorem{defandrem}[theorem]{Definition and Remarks}
\newtheorem{example}[theorem]{Example}
\newtheorem{remark}[theorem]{Remark}
\newtheorem{notandrem}[theorem]{Notation and Remarks}
\theoremstyle{remark}
\numberwithin{equation}{section}
\newcommand{\N}{\mathbb N}
\newcommand{\Z}{\mathbb Z}
\newcommand{\R}{\mathbb R}
\newcommand{\Id}{\mathrm{Id}}
\newcommand{\GL}{\operatorname{GL}}
\newcommand{\Ot}{\operatorname{O}}
\newcommand{\Sp}{\operatorname{Sp}}
\newcommand{\tSp}{\widetilde{\operatorname{Sp}}}
\newcommand{\Aut}{\operatorname{Aut}}
\newcommand{\SP}{\mathcal{P}}
\newcommand{\SPs}{\mathcal{P}^*}
\newcommand{\diag}{\operatorname{diag}}
\newcommand{\ba}{\setminus}
\newcommand{\z}{\mathfrak{z}}
\newcommand{\h}{\mathfrak{h}}
\newcommand{\m}{\mathbf{m}}
\newcommand*{\ltrans}[1]{\prescript{\mathrm t}{}{#1}}
\title{Relatively Compact Sets of Heisenberg Manifolds}
\author{Sebastian Boldt}
\email{boldt@math.uni-leipzig.de}
\address{Mathematisches Institut\\Universit\"at Leipzig\\04081 Leipzig\\Germany}
\subjclass[2010]{58D27, 22E25}
\begin{document}
	
\maketitle
\begin{abstract}
	We give a necessary and sufficient condition for a set of left invariant metrics on a compact Heisenberg manifold to be relatively compact in the corresponding moduli space.
\end{abstract}
	
\section{Introduction}

The classic compactness Theorem of Mahler, also called Mahler's Selection Theorem, states that a set of lattices in $\R^n$ has compact closure if and only if the volume of each lattice is uniformly bounded from above and the length of a shortest nontrivial vector in each lattice is uniformly bounded from below. The topology on the space of lattices will become clear later. The statement of the theorem remains true if we replace lattices by flat tori. Indeed, there is a bijective correspondence between (isometry classes of) lattices and (isometry classes of) flat tori, under which shortest nontrivial vectors correspond to shortest nontrivial closed curves.

In this note we are concerned with an extension of this theorem to compact Riemannian Heisenberg manifolds. A compact Heisenberg manifold is a compact quotient of the Heisenberg group by a discrete subgroup. These quotients can be characterised as the total spaces of (nontrivial) $S^1$-bundles over even-dimensional tori. Given a left invariant metric, they become Riemannian submersions whose base is a flat torus and whose $S^1$-fibres are totally geodesic. One might hope that a set of such metrics has compact closure if the condition of Mahler's theorem applies to the base torus and the fibres. Unfortunately, this is not true. One has to bound an additional invariant.

We will now explain the above in greater detail. Let $L$ be a lattice (of full rank) in $\R^n$. Associated with $L$ is the flat torus $T=L\ba \R^n$. The lattice $L$ can be recovered from $T$ as the group of deck transformations of the universal Riemannian covering $\R^n\to T$. We can describe $L$ by a basis, which we view as a matrix $G\in\GL(n;\R)$. This basis is of course not unique, but every other basis of $L$ can be obtained from $G$ by a change of basis, which corresponds to multiplication (from the right) by an element of $\GL(n;\Z)$. Since we are only interested in the isometry class of $L$, we view every lattice that we obtain from an orthogonal transformation applied to $L$ as identical to $L$. This means that the isometry class of $L$ (resp.\ $T$) is given by the element $\Ot(n;\R)\cdot G\cdot\GL(n;\Z)\in \Ot(n;\R)\ba \GL(n;\R) / \GL(n;\Z)$. Now note that the quotient $\Ot(n;\R)\ba \GL(n;\R)$ is diffeomorphic to the space $\SP_{n}$ of symmetric, positive definite $n\times n$-matrices: $\GL(n;\R)$ acts from the right on $\SP_{n}$ via $\SP_n\times \GL(n;\R)\ni (Y,H)\mapsto Y[H]=\ltrans{H}YH\in\SP_{n}$. This action is smooth and transitive and the isotropy group of the identity $\Id_n\in \SP_n$ is exactly $\Ot(n;\R)$, so that we obtain a diffeomorphism $\Ot(n;\R)\ba \GL(n;\R)\to\SP_n$ given by $\Ot(n;\R)\cdot G\mapsto \ltrans GG$. The matrix $\ltrans GG$ is the Gram matrix of the basis $G$ of $L$. The moduli space of lattices in $\R^n$ resp.\ flat $n$-dimensional tori is thus $\SP_n / \GL(n;\Z)$.

Given $[Y]\in \SP_n / \GL(n;\Z)$, the volume of the corresponding torus $T$ is given by $\det(Y)^{\sfrac 12}$ and the squared length of a shortest nontrivial closed curve by $m(Y)=\inf\{Y[a]=\ltrans aYa\,|\,\in\Z\ba\{0\}\}$. Mahler's compactness Theorem (see Corollary~\ref{cor:Mahler-Selection-Theorem}) states that a set $M\subset \SP_n / \GL(n;\Z)$ has compact closure if and only if there are constants $C_0,C_1>0$ such that $\det(Y)\leq C_1$ and $m(Y)\geq C_0$ for all $[Y]\in M$.

In this note we prove an analogous result for the moduli space of compact Riemannian Heisenberg manifolds. Let $H_n$ be the $(2n+1)$-dimensional Heisenberg group and fix a cocompact discrete subgroup $\Gamma\subset H_n$. A Riemannian metric $\m$ on $\Gamma \ba H_n$ is called left invariant if its lift to $H_n$ is left invariant, i.e., invariant under left multiplication by $H_n$ on itself. The moduli space of such Riemannian metrics is essentially $\SP_{2n}/\tSp(2n;\Z)\times (0,\infty)$ (see Theorem~\ref{thm:moduli-space-normalised-heisenberg-manifolds} for the exact definition). Here, $\tSp(2n;\Z)$ is the space of all matrices $A\in\GL(2n;\R)$ such that $\ltrans AJA=\pm J$, where $J$ is the matrix representation of the standard almost complex structure $J$ on $\R^{2n}$. We know what the compact subsets of $(0,\infty)$ are. Example~\ref{ex:noncompact-set-in-moduli-space} shows that Mahler's Selection Theorem does not hold for $\SP_{2n}/\tSp(2n;\Z)$, i.e., there is a sequence $\{[Y_k]\}_{k\in\N}$ which does not contain a converging subsequence, yet $m(Y_k)=\det(Y_k)=1$ for all $k$.
	
The solution is to place a uniform bound on the (absolut value of the) eigenvalues of $Y^{-1}J$. Accordingly, our main result, Theorem~\ref{thm:compact-sets-in-moduli-space}, states the following.

\begin{maintheo*}
	Let $M\subset \SP_{2n}/\tSp(2n;\Z)$. Assume there exist positive constants $C_0,C_1,C_2$ such that $m(Y)\geq C_0$, $\det(Y)\leq C_1$ and the eigenvalues of $Y^{-1}J$ are bounded in absolute value from above by $C_2$ for all $[Y]\in M$. Then $M$ has compact closure.
\end{maintheo*}

Necessity of the stated condition follows from the continuity of $\det$, $m$ and the eigenvalues of $Y^{-1}J$.

The proof uses the following idea. Since $\tSp(2n;\Z)\subseteq \GL(2n;\Z)$ is a subgroup of infinite index ($n>1$), a fundamental domain for $\SP_{2n}/\tSp(2n;\Z)$ could be obtained by gluing an infinite number of displacements of a fundamental domain for $\SP_{2n}/\GL(2n;\Z)$. Such a domain is Minkowski's fundamental domain, for which the Hermite-Mahler Compactness Theorem (see Theorem~\ref{thm:hermite-mahler}) characterises the compact sets. We then show that the preimage in $\SP_{2n}$ of a set $M\subset \SP_{2n}/\tSp(2n;\Z)$ which satisfies the assumptions of Theorem~\ref{thm:compact-sets-in-moduli-space} is necessarily contained in a finite number of displacements of Minkowski's fundamental domain and satisfies the assumptions of the Hermite-Mahler Theorem and must therefore be compact. The crucial matrix inequality needed in the proof is established in Lemma~\ref{lem:moduli-space-special-inequality}.

In geometric terms this means that a set of left invariant metrics on a fixed quotient $\Gamma\ba H_n$ is precompact iff the volume of the $S^1$-fibres is bounded from above and below, the systole of the base torus is bounded from below, the volume of the base torus is bounded from above, and that all sectional curvatures are bounded from above by a positive constant, see Remark \ref{rem:sectional-curvatures}.

The results in this note have been part of the author's PhD-thesis \cite{Bol18} and were very recently extended to left invariant sub-Riemannian metrics by K.~Tashiro \cite{Tas20}.

This note is organised as follows. In Section~\ref{sec:mflds} we introduce compact Riemannian Heisenberg manifolds and their moduli spaces. Section~\ref{sec:compactsets} contains the main result (Theorem~\ref{thm:compact-sets-in-moduli-space}) and its proof.

\section*{acknowledgements}

The author gratefully acknowledges financial support by the Collaborative Research Center 647 - Space - Time - Matter.

\section{Riemannian Heisenberg Manifolds}\label{sec:mflds}
\noindent
This section describes compact Riemannian Heisenberg manifolds. The exposition follows mostly \cite{zbMATH03967358}.

For $x,y\in\R^n$, $s\in\R$ we let
\begin{align*}
\gamma(x,y,s) := \begin{pmatrix}		1 & \ltrans x & s \\		0 &  \Id_n & y \\		0 & 0 & 1\\	\end{pmatrix},\quad & X(x,y,s) := \begin{pmatrix}		0 & \ltrans x & s \\		0 &  0 & y \\		0 & 0 & 0\\	\end{pmatrix}\,.
\end{align*}
The \textit{$(2n+1)$-dimensional Heisenberg group} $H_n$ is $H_n := \left\{\gamma(x,y,s)\,|\,x,y\in\R^n,s\in\R\right\}$ with the Lie group structure that it inherits as a closed subset of $\GL(n+2;\R)$. Its Lie algebra is $\mathfrak{h}_n = \left\{X(x,y,s)\,|\,x,y\in\R^n,s\in\R\right\}$. The standard basis $\mathfrak B_n$ of $\h_n$ is 
\[\mathfrak B_n := (X_1,\ldots,X_n,Y_1,\ldots,Y_n,Z)\] with 
\begin{equation*}
\begin{aligned}
X_j & := X(e_j,0,0) \text{ for all } 1\leq j\leq n\,,\\
Y_j & := X(0,e_j,0) \text{ for all } 1\leq j\leq n\,,\\
Z & := X(0,0,1)\,,
\end{aligned}
\end{equation*}
where $(e_1,\ldots,e_n)$ is the standard basis of $\R^n$. Multiplication and inversion in $H_n$ obey the rules
\begin{equation}\label{eqn:H_n-multiplication}
\begin{aligned}
\gamma(x,y,s)\cdot\gamma(x',y',s') & = \gamma(x+x',y+y',s+s'+\langle x,y'\rangle)\,,\\
\gamma(x,y,s)^{-1} & = \gamma\left(-x,-y,-s+\langle x,y\rangle\right)\,,
\end{aligned}
\end{equation}
where $\langle\cdot,\cdot\rangle$ denotes the Euclidean inner product. The Lie exponential map \linebreak$\exp:\h_n\to H_n$,
\begin{align*}
\exp X(x,y,s) &= \Id + X(x,y,s)+\frac{1}{2}X(x,y,s)^2\\
&= \gamma(x,y,s+\tfrac12 \langle x,y\rangle),
\end{align*}
is a global diffeomorphism with inverse $\log:H_n\to\h_n$,
\begin{align*}
\log \gamma(x,y,s) = X(x,y,s-\tfrac 12 \langle x,y\rangle)\,.
\end{align*}
From \eqref{eqn:H_n-multiplication} it follows that commutators in $H_n$ and $\h_n$ are given by
\begin{equation}\label{eqn:Heisenberg-commutators}
\begin{aligned}
\left[\gamma(x,y,s),\gamma(x',y',s')\right] & = \gamma\left(0,0,A((x,y),(x',y'))\right)\,,\\
\left[X(x,y,s),X(x',y',s')\right] & = X\left(0,0,A((x,y),(x',y'))\right)\,,
\end{aligned}
\end{equation}
where $A$ is the standard symplectic form on $\R^{2n}$ whose matrix resepresentation w.r.t.~the standard basis is
\begin{align}\label{def:symplectic-J}
J=\begin{pmatrix}
	0 & \Id_n \\
	-\Id_n & 0
\end{pmatrix}\,.
\end{align}
Since $A$ is nondegenerate it follows from \eqref{eqn:Heisenberg-commutators} that the centre $\z = \z_n$ of $\h_n$ is $\z_n = \{X(0,0,s)\,|\,s\in\R\}$. We identify the subspace $\{X(x,y,0)\,|\,(x,y)\in\R^{2n}\}$ with $\R^{2n}$. Under this identification $\h_n$ is the direct sum $\h_n=\R^{2n}\bigoplus\z$ and $(X_1,\ldots,X_n,Y_1,\ldots,Y_n)$ is the standard basis of $\R^{2n}$. By \eqref{eqn:Heisenberg-commutators} we have for all $X,Y\in\R^{2n}$
\begin{equation*}
[X,Y] = A(X,Y)Z\,.
\end{equation*}

We now turn to the automorphisms of $H_n$ and $\h_n$. Since $H_n$ is connected and by the naturality of the Lie exponential map, we will identify automorphisms of $H_n$ with their differentials at the identity. These are precisely the automorphisms of $\h_n$. Furthermore, we will identify an automorphism $\varphi\in\Aut(\h_n)$ with its matrix representation relative to the basis $\mathfrak{B}_n$. Let  \[
\widetilde{\Sp}(2n;\R):=\left\{\beta\in\GL(2n;\R)\,|\,\ltrans \beta J\beta=\epsilon\left(\beta\right)J,\, \varepsilon\left(\beta\right)=\pm1\right\}\,,
\]
 with $J$ as in \eqref{def:symplectic-J}. We imbed $\tSp(2n;\R)$ into $\GL(2n+1;\R)$ via
\begin{align*}
\tSp(2n;\R)\ni\beta\mapsto \begin{pmatrix}	\beta 	& 0 \\	0	& \epsilon\left(\beta\right)\end{pmatrix} \in\GL(2n+1;\R)\,.
\end{align*}
For $a\in\R\ba\{0\}$ and $w\in\R^{2n}$, let
\begin{align*}
\alpha(a,w):=\begin{pmatrix} a\cdot \Id_{2n} & 0 \\	w^t  & a^2 	\end{pmatrix}\in\GL(2n+1;\R)\,.
\end{align*}
Simple calculations show that the group $\Aut\left(\h_n\right)$ consists of all products of the form $\alpha(a,w)\cdot \beta$, $a\in\R\ba\{0\},w\in\R^{2n},\beta\in\widetilde{\Sp}(2n;\R)$. The inner automorphisms are those for which $a=1$ and $\beta=\Id_{2n}$.

We introduce a set of distinguished uniform (i.e., discrete and cocompact) subgroups of $H_n$. Let
\begin{equation*}
\mathcal{D}_n:=\left\{ r=\left(r_1,\ldots,r_n\right)\in \N^n\,|\, \forall i\in\{1,\ldots,n-1\}:\,r_i|r_{i+1}\right\}\,.
\end{equation*} 
For an $r\in\mathcal{D}_n$ define the matrix 
\begin{align*}
\delta_r:=\diag\left(r_1,\ldots,r_n,1,\ldots,1\right)
\end{align*}
and the uniform subgroup $\Gamma^r\subset H_n$ by
\begin{equation*}
\Gamma^r := \left\{\gamma(x,y,s)\,|\,\left(\begin{smallmatrix}
x\\ y
\end{smallmatrix}\right)\in\delta_r\cdot\Z^{2n},s\in\Z \right\}\,.
\end{equation*}

We now address the Riemannian structure of Heisenberg manifolds. We call any Riemannian metric $\m$ on a compact Heisenberg manifold $\Gamma\ba H_n$ that originates from a left invariant metric on $H_n$ \textit{left invariant}. Any such metric is uniquely determined by its values at the (equivalence class of the) identity, i.e., by the induced inner product on $\h_n$. Via the basis $\mathfrak{B}_n$, we identify the space $\mathscr M(\h_n)$ of inner products on $\h_n$ with $\SP_{2n+1}$, the space of symmetric positive definite $(2n+1)\times (2n+1)$-matrices. In case that for a given metric $\m\in\SP_{2n+1}$ the subspaces $\z\subset\h_n$ and $\R^{2n}\subset\h_n$ are orthogonal, the metric takes the form
\begin{equation*}
\m = \begin{pmatrix}	h & 0 \\	0 & g	\end{pmatrix} \in\SP_{2n}\times (0,\infty)\,.
\end{equation*}
We call such a metric \textit{normalised} and write $\m=(h,g)$. We also call a compact Riemannian Heisenberg manifold $\left(\Gamma^r\ba H_n,\m\right)$, where $\m=(h,g)$ is a normalised metric, a \textit{normalised Heisenberg manifold}.
%If $\m$ is a left invariant metric on and $\varphi$ an automorphism of $H_n$ then with respect to the identifications we have made the pullback $\varphi^*\m$ is given by $\varphi^*\m=\m[\varphi]:=\ltrans{\varphi}\m\varphi$.

If $\m=(h,g)$ is a normalised metric and $\alpha(a,w)\cdot\beta\in\Aut(\h_n)$, then $(\alpha(a,w)\cdot\beta)^*\m$ is a normalised metric if and only if $w=0$. Note that w.r.t.~our identifications, the pullback of a metric $\m$ by an automorhism $\varphi$ corresponds to (the restriction of) the action of $\GL(2n+1;\R)$ on $\SP_{2n+1}$: $\varphi^*\m=\m[\varphi]=\ltrans{\varphi\m\varphi}$.

Furthermore, for $r\in\mathcal D_n$ we have $\alpha(a,0)\cdot\beta \log\Gamma^r=\log\Gamma^r$ if and only if $a=1$ and $\beta\in\delta_r\GL(2n;\Z)\delta_r^{-1}$. We therefore define
\begin{align*}
G_r:= \delta_r\GL(2n;\Z)\delta_r^{-1}\,, & \qquad & \Pi_r:= G_r\cap \widetilde{\Sp}(2n;\R)\,.
\end{align*}

\begin{theorem}[{\cite[Corollary~2.5, Theorem~2.7]{zbMATH03967358}}]\label{thm:moduli-space-normalised-heisenberg-manifolds}
	Every compact Riemannian Heisenberg manifold is isometric to a normalised Heisenberg manifold. Moreover, two normalised Heisenberg manifolds $\left(\Gamma^r\ba H_n,\m\right)$ and $\left(\Gamma^{s}\ba H_n,\m'\right)$ are isometric iff $r=s$ and $\m' = \beta^* \m$ for some $\beta\in \Pi_r$. They are homeomorphic iff $r=s$. Accordingly, the set
	\begin{align*}
	\mathcal{M}_n := \bigcup_{r\in\mathcal{D}_n} \mathcal{M}_{n}^r\,, &\qquad \mathcal{M}_{n}^r := \left(\SP_{2n}\times(0,\infty)\right)/\,\Pi_r
	\end{align*}
	parametrises the isometry classes of compact Riemannian Heisenberg manifolds.
\end{theorem}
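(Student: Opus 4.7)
\emph{Plan.} The theorem bundles a lattice classification in $H_n$, a normal-form theorem for left-invariant metrics, and the identification of isometries with Lie-group automorphisms. Since it is attributed to \cite{zbMATH03967358}, I sketch the route rather than carry out the linear algebra in full.

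\emph{Step 1: normalising the lattice.} Let $\Gamma\subset H_n$ be uniform discrete. Then $\Gamma\cap\exp\z$ is infinite cyclic and the image $\bar\Gamma$ of $\Gamma$ in $H_n/\exp\z\cong\R^{2n}$ is a lattice. Because $\z$ is one-dimensional, the symplectic form $A$ takes values on $\bar\Gamma$ in $\log(\Gamma\cap\exp\z)$. An automorphism $\alpha(a,0)$ with suitable $a$ rescales so that $\log(\Gamma\cap\exp\z)=\Z\cdot Z$; the form $A|_{\bar\Gamma}$ is then $\Z$-valued and nondegenerate, and its Smith normal form yields a symplectic basis of $\bar\Gamma$ in which the lattice is $\delta_r\Z^{2n}$ for a unique $r\in\mathcal{D}_n$. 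A further $\beta\in\tSp(2n;\R)$ aligns that basis with the standard one, and an inner automorphism $\alpha(1,w)$ adjusts the section $\bar\Gamma\to\Gamma$ so that the lift becomes exactly $\Gamma^r$. Combined, these give $\varphi\in\Aut(H_n)$ with $\varphi(\Gamma)=\Gamma^r$.

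\emph{Step 2: normalising the metric.} An arbitrary inner product on $\h_n$ decomposes as $\m=\begin{pmatrix}h & v\\ v^t & g\end{pmatrix}$ relative to $\R^{2n}\oplus\z$. A short computation shows that pulling $\m$ back by $\alpha(1,w)$ zeroes the off-diagonal block precisely when $w=-v/g$; hence an inner automorphism always normalises the metric. Because the excerpt has already observed that automorphisms of the form $\alpha(a,0)\cdot\beta$ preserve normalisation, the clean order is: first kill $v$ by $\alpha(1,-v/g)$, then apply an appropriate $\alpha(a,0)\cdot\beta$ from Step~1 to move the resulting lattice to $\Gamma^r$. The composition yields a normalised metric on $\Gamma^r\ba H_n$ isometric to $(\Gamma\ba H_n,\m)$, establishing the first assertion.

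\emph{Step 3: isometries and moduli.} An isometry $F\colon(\Gamma^r\ba H_n,\m)\to(\Gamma^s\ba H_n,\m')$ lifts to an isometry $\tilde F$ of the simply connected Riemannian universal covers. Composing with a left translation I may assume $\tilde F(e)=e$, and then invoke the classical rigidity result that every pointed isometry of a left-invariant metric on a simply connected nilpotent Lie group is a Lie-group automorphism; this realises $\tilde F$ as some $\varphi\in\Aut(H_n)$ with $\varphi(\Gamma^r)=\Gamma^s$ and $\varphi^{*}\m'=\m$. Both metrics being normalised forces $\varphi=\alpha(a,0)\cdot\beta$; the excerpt's description of stabilisers of $\log\Gamma^r$ then forces $a=1$ and $\beta\in G_r\cap\tSp(2n;\R)=\Pi_r$, with $r=s$ as a byproduct. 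The homeomorphism assertion follows because $r$ is encoded in the Smith normal form of the commutator pairing on $\pi_1(\Gamma^r\ba H_n)=\Gamma^r$, hence a topological invariant; the moduli description $\mathcal{M}_n=\bigcup_{r\in\mathcal{D}_n}\mathcal{M}_{n}^r$ then falls out by assembling isometry classes over all $r$. I expect the hardest input to be the pointed-isometry rigidity step at the start of Step~3: without it one cannot cleanly identify isometries of the quotient with elements of $\Pi_r$.
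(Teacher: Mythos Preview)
The paper does not prove this theorem at all: it is stated with the attribution to \cite[Corollary~2.5, Theorem~2.7]{zbMATH03967358} and then the text moves on immediately to the Kaplan map. There is therefore no ``paper's own proof'' to compare your proposal against. Your outline is a faithful sketch of the Gordon--Wilson argument and would be appropriate as an expository supplement, not as a reconstruction of something in the present paper.

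One point in your Step~3 deserves care. After composing the lifted isometry $\tilde F$ with a left translation $L_{h^{-1}}$ to arrange $\tilde F(e)=e$, the resulting automorphism $\varphi$ satisfies $\varphi(\Gamma^r)=h^{-1}\Gamma^s h$, a \emph{conjugate} of $\Gamma^s$, not $\Gamma^s$ itself. You then want to invoke the paper's stabiliser description, which is stated only for $\alpha(a,0)\cdot\beta\,\log\Gamma^r=\log\Gamma^r$, so two small arguments are still needed: first, that $\varphi$ already maps the centre lattice $\Z\cdot Z$ onto itself, forcing $a^2=1$; second, that on the abelianisation the induced map $\pm\beta$ carries $\delta_r\Z^{2n}$ to $\delta_s\Z^{2n}$ while (up to sign) preserving the symplectic form, whence the elementary-divisor tuple is preserved and $r=s$. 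Only then are you in the stabiliser situation and can conclude $\beta\in\Pi_r$. The conjugation by $h$ is absorbed because conjugation in $H_n$ is an inner automorphism $\alpha(1,w')$, which fixes $\Gamma^s$ modulo centre and can be folded into the normalisation step; but this should be said explicitly rather than asserted as ``$\varphi(\Gamma^r)=\Gamma^s$''.
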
 

The \textit{Kaplan map} $j:\z\to\mathfrak{so}(\R^{2n},h)$ of a normalised Heisenberg manifold \linebreak$(\Gamma^r\ba H_n,\m)$, $\m=(h,g)$, is defined by
\begin{align}\label{eqn:definition-Kaplan-j}
\langle j(W)X,Y\rangle_\m = \langle W,[X,Y]\rangle_\m\,,
\end{align}
for all $W\in\z, X,Y\in\R^{2n}$.
Here, $\mathfrak{so}(\R^{2n},h)$ denotes the space of skew-symmetric endomorphisms of $(\R^{2n},h)$ which we identify via the standard basis with the $2n\times 2n$-matrices which are skew-symmetric w.r.t.~$h$. From \eqref{eqn:definition-Kaplan-j} it follows that \begin{align}\label{eqn:Kaplan-j-matrix}
	j(g^{-\sfrac 12}Z)=-g^{\sfrac 12}h^{-1}J\,.
\end{align}The normalised Heisenberg manifold $(\Gamma^r\ba H_n,\m)$ is of \textit{Heisenberg type} if the eigenvalues of $j(g^{-\sfrac 12}Z)$ are $\pm i$, i.e., if the eigenvalues of $h^{-1}J$ are $\pm ig^{-\sfrac 12}$. 

At last, we mention the sectional curvatures of $(\Gamma^r\ba H_n,\m)$ which can be easily expressed using the Kaplan map. Let $X,Y\in\R^{2n}$ be orthonormal elements of $(\h_n,\m)$. Then the sectional curvature $K$ of $(\Gamma^r\ba H_n,\m)$ is given by
\begin{gather}\label{eqn:sec-curvatures}
	\begin{aligned}
	K(X,Y) &= -\frac 34|[X,Y]|_\m^2\,,\\
	K(X,Z) &=\frac 14 |j(g^{-\sfrac 12}Z)X|_\m^2\,,
	\end{aligned}
\end{gather}
 see \cite[(2.4)]{Ebe94} (note the wrong sign on the right hand side of (2.4) b) though).

\section{Relatively Compact Sets of Riemannian Heisenberg Manifolds}\label{sec:compactsets}

Let $Y\in\SP_{2n}$. From \eqref{eqn:definition-Kaplan-j} and \eqref{eqn:Kaplan-j-matrix} we know that the map $Y^{-1}J$ is skew-symmetric w.r.t.\ the inner product on $\R^{2n}$ defined by $Y$. Thus the eigenvalues of $Y^{-1}J$ are purely imaginary and come in complex conjugate pairs.

\begin{defandrem}\label{def:d_j-h-and-remarks}\leavevmode
	\begin{enumerate}[(i)]
		\item For every $n\in\N$ we define $n$ functions $d_1,\ldots,d_n:\SP_{2n}\to (0,\infty)$ such that $\pm id_k(Y)$, $k=1,\ldots,n$, are the eigenvalues of $Y^{-1}J$ and $d_1\leq d_2\leq \ldots \leq d_n$.
		\item\label{def:d_j-h-and-remarks:enum:continuity} Note that the $d_k$ are continuous (see, e.g., \cite[Theorem~1]{zbMATH03214567}).
		\item\label{def:d_j-h-and-remarks:enum:invariance-under-sp} The functions $d_1,\ldots, d_n:\SP_{2n}\to (0,\infty)$ are invariant under the action of $\widetilde{\Sp}(2n;\R)$. Indeed, for $Y\in\SP_{2n}$ and $A\in\widetilde{Sp}(2n;\R)$ one has
		\begin{align*}
			(Y[A])^{-1}J &= (\ltrans A Y A)^{-1}J = A^{-1}Y^{-1}\ltrans A^{-1}J \sim A (A^{-1}Y^{-1}\ltrans A^{-1}J)A^{-1}\\
			& = Y^{-1}\ltrans A^{-1} J A^{-1} = \varepsilon(A)Y^{-1}J
		\end{align*}
		with $\varepsilon(A)=\pm 1$. By definition of the $d_k$ we thus have $d_k(Y[A])=d_k(Y)$ for all $1\leq k\leq n$, $Y\in\SP_{2n}$ and $A\in\widetilde{\Sp}(2n;\R)$.
		\item Define $\SPs_{2n}(Y) 	:= \left\{Y[S]\,|\,S\in\widetilde{\Sp}(2n;\R)\right\}=\left\{\ltrans SYS\,|\,S\in\widetilde{\Sp}(2n;\R) \right\}$. 
	\end{enumerate}
\end{defandrem}

\begin{proposition}\label{prop:same-d_n-symplectic-leaf}
	Let $X,Y\in\SP_{2n}$. Then
	\[
	X\in\SPs_{2n}(Y) \quad\text{iff}\quad Y\in\SPs_{2n}(X)\quad\text{iff}\quad d_j(X)=d_j(Y) \text{ for all } 1\leq j\leq n\,.
	\]
\end{proposition}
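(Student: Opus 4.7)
The plan is to treat the two biconditionals separately. The equivalence $X \in \SPs_{2n}(Y) \iff Y \in \SPs_{2n}(X)$ is a symmetry statement, for which I would first check that $\tSp(2n;\R)$ is closed under inversion: if $\ltrans A J A = \epsilon(A) J$ with $\epsilon(A) = \pm 1$, then applying $\ltrans{(A^{-1})}$ on the left and $A^{-1}$ on the right yields $\ltrans{(A^{-1})} J A^{-1} = \epsilon(A) J$, so $A^{-1} \in \tSp(2n;\R)$. Hence $X = Y[S]$ implies $Y = X[S^{-1}]$. The forward implication $X \in \SPs_{2n}(Y) \Rightarrow d_j(X) = d_j(Y)$ is exactly Definition and Remarks~\ref{def:d_j-h-and-remarks}\,(iii).

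The bulk of the work is the converse: assuming $d_j(X) = d_j(Y)$ for all $j$, produce $A \in \tSp(2n;\R)$ with $X = Y[A]$. My plan is to base this on a simultaneous normal form: for every $Z \in \SP_{2n}$ there exists $T_Z \in \GL(2n;\R)$ with
\[
Z[T_Z] = \Id_{2n} \qquad \text{and} \qquad J[T_Z] = D(Z),
\]
where $D(Z)$ is the block-diagonal matrix with $2 \times 2$ blocks $\left(\begin{smallmatrix} 0 & d_k(Z) \\ -d_k(Z) & 0 \end{smallmatrix}\right)$, $k = 1, \ldots, n$. To construct $T_Z$, first pick any $R$ with $Z[R] = \Id_{2n}$ (e.g.\ $R = Z^{-\sfrac 12}$). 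Then $J[R] = \ltrans R J R$ is skew-symmetric, so by the real spectral theorem for skew-symmetric matrices there is an orthogonal $O$ bringing $J[R]$ into canonical block-diagonal form with positive block entries arranged in increasing order. Setting $T_Z := R O$ yields $Z[T_Z] = \Id_{2n}$ and $J[T_Z]$ in the prescribed shape; the routine identity $T^{-1}(Z^{-1}J)T = (Z[T])^{-1} J[T]$ then shows that $Z^{-1}J$ is similar to $D(Z)$, which forces the block entries to be exactly the $d_k(Z)$.

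With the normal form available, the converse is immediate. If $D(X) = D(Y) =: D$, set $A := T_Y T_X^{-1}$. Then
\[
Y[A] = (Y[T_Y])[T_X^{-1}] = \Id[T_X^{-1}] = (X[T_X])[T_X^{-1}] = X,
\]
and analogously $J[A] = (J[T_Y])[T_X^{-1}] = D[T_X^{-1}] = (J[T_X])[T_X^{-1}] = J$, so $A \in \Sp(2n;\R) \subseteq \tSp(2n;\R)$ and $X = Y[A] \in \SPs_{2n}(Y)$.

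I expect the main obstacle to be the normal form itself — essentially Williamson's theorem in the positive-definite case. While existence is classical, care is needed to match the block entries to the $d_k(Z)$ in the prescribed increasing order; that is where the similarity $T^{-1}(Z^{-1}J)T = (Z[T])^{-1}J[T]$ together with the ordering convention $d_1 \leq \ldots \leq d_n$ becomes essential.
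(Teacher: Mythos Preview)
Your proposal is correct. The approach is close in spirit to the paper's but organised differently: you factor through a Williamson-type normal form (for each $Z\in\SP_{2n}$ produce $T_Z$ with $Z[T_Z]=\Id$ and $J[T_Z]=D(Z)$) and then set $A=T_Y T_X^{-1}$. The paper instead works directly: from $d_j(X)=d_j(Y)$ it notes that the skew-symmetric matrices $X^{-\sfrac12}JX^{-\sfrac12}$ and $Y^{-\sfrac12}JY^{-\sfrac12}$ have the same spectrum and are therefore conjugate by some $A\in\Ot(2n;\R)$; then $S:=X^{-\sfrac12}AY^{\sfrac12}$ is checked to lie in $\Sp(2n;\R)$ and to satisfy $Y=X[S]$. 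Both routes rest on the same linear-algebraic ingredient --- the spectral theorem for real skew-symmetric matrices applied to $Z^{-\sfrac12}JZ^{-\sfrac12}$ --- so the difference is packaging rather than substance. The paper's one-step construction is shorter and avoids the ordering bookkeeping for the $d_k$, while your version has the merit of making the link to Williamson's classical theorem explicit and yields the normal form as a reusable byproduct.
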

\begin{proof}
	The first 'iff' is a direct consequence of the definition of $\SPs_{2n}(M)$, $M\in\SP_{2n}$.
	
	We first prove the second 'if': By assumption $X^{-1}J\sim Y^{-1}J$. Since $X^{-1}J\sim X^{-\sfrac 12}JX^{-\sfrac 12}$, we have $X^{-\sfrac 12}JX^{-\sfrac 12}\sim Y^{-\sfrac 12}JY^{-\sfrac 12}$, which means that there is $A\in\Ot(2n;\R)$ such that
	\[
	\ltrans AX^{-\sfrac 12}JX^{-\sfrac 12}A=Y^{-\sfrac 12}JY^{-\sfrac 12}\,.
	\]
	This implies that $X^{-\sfrac 12}AY^{\sfrac 12}=:S\in\widetilde{\Sp}(2n;\R)$. By definition of $S$ we have $AY^{\sfrac 12}=X^{\sfrac 12}S$ and hence $Y=Y^{\sfrac 12}\ltrans AAY^{\sfrac 12}=\Id[AY^{\sfrac 12}]=\Id[X^{\sfrac 12}S]=\ltrans SXS$ as claimed. The 'only if' part is proved as follows: If $Y\in\SPs_{2n}(X)$ then $Y=\ltrans S X S$ for some $S\in\widetilde{\Sp}(2n;\R)$. But then $Y^{-1}J=S^{-1}X^{-1}\ltrans S^{-1}J=\pm S^{-1}X^{-1}JS\sim \pm X^{-1}J$ since $S\in\widetilde{\Sp}(2n;\R)$.
\end{proof}

Theorem \ref{thm:moduli-space-normalised-heisenberg-manifolds} now takes the following form for compact Heisenberg manifolds of Heisenberg type.
\begin{corollary}\label{cor:Heisenberg-type-Heisenberg-manifolds}
	The normalised Heisenberg manifold $\left(\Gamma^r \ba H_n,\m\right)$ with $\m=(h,g)$ is of Heisenberg type if and only if  $h\in\SPs_{2n}\left(g^{\sfrac 12}\Id\right)$ if and only if $d_k(h)=g^{-\sfrac 12}$ for all $1\leq k \leq n$. Accordingly, the set
	\begin{align*}
	\mathcal{M}_{n}^{HT}:=\bigcup_{r\in\mathcal{D}_n}\mathcal{M}_{n}^{r,HT}\,,& \quad \mathcal{M}_{n}^{r,HT} :=\left\{[(h,g)]\in \mathcal M^r_{n}\,|\,d_k(h)=g^{-\sfrac 12}, 1\le k\le n \right\}
	\end{align*}
	parametrises the isometry classes of normalised Heisenberg manifolds of Heisenberg type. Moreover, each $\mathcal{M}_{n}^{r,HT}$ is closed in $\mathcal{M}_{n}^{r}$.
\end{corollary}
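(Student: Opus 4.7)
The strategy is to reduce both equivalences to Proposition~\ref{prop:same-d_n-symplectic-leaf} applied with the reference matrix $g^{\sfrac 12}\Id$, and then derive the parametrisation and closedness claims from the continuity and $\tSp(2n;\R)$-invariance of the $d_k$ recorded in Definition and Remarks~\ref{def:d_j-h-and-remarks}.

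First, by the very definition of $d_1,\ldots,d_n$, the eigenvalues of $h^{-1}J$ are $\pm i d_k(h)$. Hence the defining Heisenberg-type condition ``the eigenvalues of $h^{-1}J$ are $\pm i g^{-\sfrac 12}$'' is literally the condition $d_k(h)=g^{-\sfrac 12}$ for all $k$; this gives the second equivalence for free. For the first equivalence I would compute the $d_k$ of the reference matrix: $(g^{\sfrac 12}\Id)^{-1}J=g^{-\sfrac 12}J$, and since $J^2=-\Id$ the eigenvalues of $g^{-\sfrac 12}J$ are $\pm ig^{-\sfrac 12}$, so $d_k(g^{\sfrac 12}\Id)=g^{-\sfrac 12}$ for every $k$. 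Proposition~\ref{prop:same-d_n-symplectic-leaf} applied to $X=h$ and $Y=g^{\sfrac 12}\Id$ then yields $h\in\SPs_{2n}(g^{\sfrac 12}\Id)$ if and only if $d_k(h)=g^{-\sfrac 12}$ for all $k$.

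For the parametrisation statement, the point is that the condition $d_k(h)=g^{-\sfrac 12}$ is $\Pi_r$-invariant and hence descends to the quotient $\mathcal{M}_n^r$. An element $\beta\in\Pi_r\subseteq\tSp(2n;\R)$ acts on a normalised metric $\diag(h,g)$ by pullback via the automorphism $\diag(\beta,\varepsilon(\beta))$, which sends $\diag(h,g)$ to $\diag(h[\beta],\varepsilon(\beta)^2 g)=\diag(h[\beta],g)$; thus the induced action on $\SP_{2n}\times(0,\infty)$ is $(h,g)\cdot\beta=(h[\beta],g)$. By part (iii) of Definition and Remarks~\ref{def:d_j-h-and-remarks} we have $d_k(h[\beta])=d_k(h)$, so the defining condition is preserved. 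Combined with Theorem~\ref{thm:moduli-space-normalised-heisenberg-manifolds} this shows that $\mathcal{M}_n^{r,HT}$ parametrises the isometry classes of normalised Heisenberg manifolds of Heisenberg type.

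Closedness is then immediate: the preimage of $\mathcal{M}_n^{r,HT}$ in $\SP_{2n}\times(0,\infty)$ is the common zero set of the continuous functions $(h,g)\mapsto d_k(h)-g^{-\sfrac 12}$ (continuity of $d_k$ being part (ii) of Definition and Remarks~\ref{def:d_j-h-and-remarks}), hence closed; by definition of the quotient topology, $\mathcal{M}_n^{r,HT}$ is closed in $\mathcal{M}_n^r$. I do not anticipate a serious obstacle here, as the corollary is essentially a reformulation of the Heisenberg-type condition via Proposition~\ref{prop:same-d_n-symplectic-leaf}; the only bookkeeping point is to verify that the $\Pi_r$-action leaves the $(0,\infty)$-factor untouched, which uses the explicit form $\alpha(1,0)\cdot\beta$ of the admissible automorphisms recalled in Section~\ref{sec:mflds}.
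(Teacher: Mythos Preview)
Your proposal is correct and follows essentially the same approach as the paper: reduce the equivalences to Proposition~\ref{prop:same-d_n-symplectic-leaf} applied with reference matrix $g^{\sfrac 12}\Id$, invoke Theorem~\ref{thm:moduli-space-normalised-heisenberg-manifolds} for the parametrisation, and use continuity of the $d_k$ for closedness. The paper's proof is more terse, leaving implicit the $\Pi_r$-invariance check and the quotient-topology argument that you spell out; these additions are harmless and arguably clarify what is going on.
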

\begin{proof}
	By the last proposition $\left(\Gamma^r \ba H_n,\m\right)$ is of Heisenberg type if and only if $d_k(h)=g^{-\sfrac 12}$ for all $1\leq k\leq n$. This, in turn, is the case if and only if $h\in\SPs_{2n}\left(g^{\sfrac 12}\Id\right)$. It follows from Theorem~\ref{thm:moduli-space-normalised-heisenberg-manifolds} that $\mathcal M_n^{HT}$ is a moduli space for normalised Heisenberg manifolds of Heisenberg type. 	
	 The set $\mathcal{M}_{n}^{r,HT}$ is closed in $\mathcal{M}_{n}^{r}$ because the $d_k$ are continuous.
\end{proof}

We will now study the compact sets of the moduli space $\mathcal{M}_n^r$.
\begin{notandrem}\label{rem:minkowski-fundamental-domain}
	\leavevmode
	\begin{enumerate}[(i)]
		\item \textit{Minkowski's fundamental domain} in $\SP_n$ is the domain
		\begin{multline*}
			\mathscr M_n = \big\{Y=\left(y_{i,j}\right)\in\SP_n\,|\,\forall k=1,\ldots,n: y_{k,k+1}\geq 0 \\ \text{ and } Y[a]\geq y_{k,k} \text{ for all } a\in\Z^{n} \text{ with } \gcd(a_k,\ldots,a_n)=1 \big\}\,.   %\,Y[a]\geq y_{k,k} \text{ if } a\in\Z^n, \gcd(a_k,\ldots,a_n) = 1,\\ y_{k,k+1}\geq 0 \text{ for all } k \big\}\,.
		\end{multline*}
		By \cite[CH.~IV, Thm.~1]{zbMATH00041535}, $\mathscr M_n$ is a connected and closed fundamental domain for $\SP_n / \GL(n;\R)$.
		\item \label{rem:enum:first-minimum} For $Y\in\SP_n$ we define
		\begin{align*}
			m(Y):=\inf\{Y[a]\,|\,a\in\Z^n\ba\{0\}\}\,.
		\end{align*}
		The value $m(Y)$ is called \textit{the first minimum of} $Y$. It is the squared norm of a shortest nonzero vector of a lattice with Gram matrix $Y$.
		Note that $m(Y)=y_{1,1}$ for $Y\in\mathscr M_n$ by the very definition of $\mathscr M_n$.

		\item   For $r\in\mathcal D_n$ and $Y\in\SP_{2n}$ we define
				\begin{equation*}
					m_r(Y):=\inf\{Y[\delta_r a]\,|\,a\in\Z^{2n}\ba\{0\}\}=m(Y[\delta_r])\,.
				\end{equation*}
				If $r=(1,\ldots,1)$, we abbreviate $m_r(Y)$ simply to $m(Y)$ which is in accordance with \eqref{rem:enum:first-minimum}. 
		\item The function $m:\SP_n\to(0,\infty)$ is constant on the orbits of the action of $\GL(n;\Z)$ on $\SP_n$ and we denote the induced function on $\SP_n/ \GL(n;\Z)$ by $m$, too.
		
		Similarly, the function $m_r:\SP_{2n}\to (0,\infty)$ is constant on the orbits of the action of  $G_r=\delta_r\GL(2n;\Z)\delta_r^{-1}$ on $\SP_{2n}$ and we denote the induced function on the quotient by $m_r$ as well.
	\end{enumerate}
\end{notandrem}

\begin{remark}\label{rem:minkowski-fundamental-domain-and-relatives}
	For $r\in\mathcal D_n$, define the map 
	\begin{align*}
		\Psi_r:\SP_{2n}&\to\SP_{2n}\\
			Y&\mapsto Y\left[\delta_r^{-1}\right]=\delta_r^{-1}Y\delta_r^{-1}\,.
	\end{align*}
	 Then $\Psi_r$ induces a map
	 \begin{align*}
		 \psi_r:\SP_{2n} / \GL(2n;\Z)&\to \SP_{2n} / G_r\\
		 [Y] &\mapsto [Y[\delta_r^{-1}]]=[\delta_r^{-1}Y\delta_r^{-1}]\,.
	 \end{align*}
	  The maps $\Psi_r$ and $\psi_r$ are diffeomorphisms and satisfy $\pi_r\circ\Psi_r = \psi_r\circ\pi$ where $\pi:\SP_{2n}\to\SP_{2n}/\GL(2n;\Z)$ and $\pi_r:\SP_{2n}\to\SP_{2n}/G_r$ are the canonical projections. It follows that
	  \begin{align}\label{eqn:definition-M_n,r-fundamental-domain}
	  	\mathscr M_{2n,r}:=\Psi_r(\mathscr M_{2n})
	  	\end{align} is a fundamental domain for the space $\SP_{2n} / G_r$. Note that $m_r(\Psi_r(Y)) = m(Y)$ and $m_r(\psi_r([Y])) = m([Y])$.
\end{remark}

The following classic theorem characterises the relatively compact subsets of $\mathscr M_n$ and is attributed to Hermite and Mahler by A.~Terras.
\begin{theorem}[Hermite-Mahler Compactness Theorem, see {\cite[CH.~4.4, Ex.~13]{zbMATH00041535}}]\label{thm:hermite-mahler}
	Any set $M\subset\mathscr M_n$ for which there are positive constants $C_0,C_1>0$ such that $m(Y)\geq C_0$ and $\det Y\leq C_1$ for all $Y\in M$ has compact closure in $\mathscr M_n$.
\end{theorem}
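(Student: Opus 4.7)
Since $\mathscr{M}_n$ is closed in $\SP_n$, it suffices to show that $M$ is bounded as a subset of $\R^{n\times n}$ and that any limit point (in the space of symmetric matrices) is positive definite; the Minkowski reduction inequalities defining $\mathscr{M}_n$ are all closed conditions, so such a limit will automatically lie in $\mathscr{M}_n$. The argument rests on two standard consequences of the defining conditions for $\mathscr{M}_n$. First, for $Y=(y_{ij})\in\mathscr{M}_n$ one has the diagonal monotonicity $y_{11}\leq y_{22}\leq\cdots\leq y_{nn}$ and the off-diagonal bound $2|y_{ij}|\leq y_{ii}$ for $i<j$; both follow immediately by testing $Y[a]\geq y_{kk}$ against $a=e_\ell$ (for $\ell>k$) and against $a=e_i\pm e_j$ in the condition for $y_{ii}$, noting that in each case the required gcd condition is trivially satisfied. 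Second, there is Minkowski's product inequality: there exists a constant $c_n>0$, depending only on $n$, such that
\[
	y_{11}y_{22}\cdots y_{nn}\leq c_n\det Y\quad\text{for every }Y\in\mathscr{M}_n,
\]
a classical result from reduction theory which may be cited from \cite{zbMATH00041535}.

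With these tools the bounding step is routine. Diagonal monotonicity combined with the hypothesis $y_{11}=m(Y)\geq C_0$ gives $y_{jj}\geq C_0$ for every $j$, and the product inequality then yields
\[
	y_{kk}\leq\frac{c_n\det Y}{\prod_{j\neq k}y_{jj}}\leq\frac{c_nC_1}{C_0^{n-1}},\qquad 1\leq k\leq n.
\]
The off-diagonal estimate now bounds every entry of $Y$ in terms of $n$, $C_0$, $C_1$ alone, so $M$ is relatively compact in the space of symmetric matrices and any sequence in $M$ has a subsequence converging to some symmetric limit $Y_\infty$.

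Finally, I would verify that $Y_\infty\in\SP_n$: it is positive semidefinite as a limit of positive definite matrices, and the product inequality also produces a uniform lower bound $\det Y\geq C_0^n/c_n>0$ which passes to the limit, forcing $Y_\infty$ to be invertible and hence positive definite. The only step with genuine content is the invocation of Minkowski's product inequality, and that is where I expect any real difficulty to lie if one wanted a self-contained argument; the rest is elementary manipulation of the reduction conditions.
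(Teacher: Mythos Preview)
The paper does not supply its own proof of this theorem; it is quoted as a classical result with a reference to Terras's book, so there is nothing to compare against on the paper's side.

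Your argument is the standard reduction-theory proof and is correct. One cosmetic slip: to obtain the bound $2|y_{ij}|\leq y_{ii}$ for $i<j$ you should test $a=e_i\pm e_j$ in the condition $Y[a]\geq y_{jj}$ (index $j$, where the gcd condition reads $\gcd(a_j,\ldots,a_n)=1$ and is satisfied since $a_j=\pm1$); testing in the condition for $y_{ii}$, as you wrote, yields $2|y_{ij}|\leq y_{jj}$ instead. Either inequality is valid and either suffices for your boundedness argument, since you bound all diagonal entries uniformly anyway. The rest---diagonal monotonicity, Minkowski's product inequality $\prod_k y_{kk}\leq c_n\det Y$, the resulting two-sided control on the diagonal, and the determinant lower bound $\det Y\geq C_0^n/c_n$ forcing positive definiteness of any limit---is exactly right.
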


From this one easily deduces Mahler's compactness theorem in its original form, which we include for the sake of completeness.

\begin{corollary}[Selection Theorem of Mahler, cf. {\cite[Ch.\ 3 \S17 Theorem 2]{zbMATH03987367}}]\label{cor:Mahler-Selection-Theorem}\leavevmode\newline
	Let $M\subset\SP_n / \GL(n;\Z)$ be a set such that there are constants $C_0,C_1>0$ so that $m([Y])\geq C_0$ and $\det([Y])\leq C_1$ for all $[Y]\in M$. Then $M$ has compact closure in $\SP_n / \GL(n;\Z)$.
\end{corollary}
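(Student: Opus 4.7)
The plan is to reduce the statement to the Hermite-Mahler Compactness Theorem by working with representatives chosen from Minkowski's fundamental domain $\mathscr M_n$. Concretely, given $M\subset\SP_n/\GL(n;\Z)$ satisfying the hypotheses, I would choose for each class $[Y]\in M$ a representative $\tilde Y\in\mathscr M_n$ (which exists since $\mathscr M_n$ is a fundamental domain for the $\GL(n;\Z)$-action on $\SP_n$) and form the lift $\tilde M:=\{\tilde Y\,|\,[Y]\in M\}\subset\mathscr M_n$.

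Next I would verify that the lift $\tilde M$ still satisfies the hypotheses of Theorem~\ref{thm:hermite-mahler}. This is immediate: both $m$ and $\det$ are constant on $\GL(n;\Z)$-orbits (for $m$ this is noted explicitly in Notation and Remarks~\ref{rem:minkowski-fundamental-domain}; for $\det$ it follows from $\det(Y[U])=\det(U)^2\det(Y)=\det(Y)$ whenever $U\in\GL(n;\Z)$), so $m(\tilde Y)=m([Y])\geq C_0$ and $\det\tilde Y=\det[Y]\leq C_1$ for every $\tilde Y\in\tilde M$. Theorem~\ref{thm:hermite-mahler} then yields that $\overline{\tilde M}$ is compact in $\mathscr M_n$, and in particular in $\SP_n$.

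Finally, I would push this back down through the canonical projection $\pi:\SP_n\to\SP_n/\GL(n;\Z)$, which is continuous. Since $\pi(\tilde M)=M$ and $\overline{\tilde M}$ is compact, the image $\pi(\overline{\tilde M})$ is compact and contains $M$, hence $\overline M\subseteq\pi(\overline{\tilde M})$ is closed inside a compact set and therefore compact.

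I do not foresee any real obstacle here: the only subtle point is that Theorem~\ref{thm:hermite-mahler} asserts compactness \emph{within $\mathscr M_n$}, so one has to be explicit that the lifting step is legitimate and that continuity of $\pi$ suffices to transfer compactness to the quotient. No additional structure beyond the invariance of $m$ and $\det$ under $\GL(n;\Z)$ is needed.
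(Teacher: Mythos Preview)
Your argument is correct and is precisely the deduction the paper has in mind: the paper does not spell out a proof of Corollary~\ref{cor:Mahler-Selection-Theorem} but merely remarks that it is ``easily deduced'' from Theorem~\ref{thm:hermite-mahler}, and your lift-to-$\mathscr M_n$, apply Hermite--Mahler, push-down-by-$\pi$ argument is exactly that easy deduction.
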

\begin{corollary}\label{cor:skewed-r-Mahler-Selection-Theorem}\mbox{}
	Let $n\in\N$, $r\in\mathcal D_n$ and $M\subset\SP_{2n} / G_r$ be a set such that there are constants $C_0,C_1>0$ so that $m_r([Y])\geq C_0$ and $\det([Y])\leq C_1$ for all $[Y]\in M$. Then $M$ has compact closure in $\SP_{2n} / G_r$.
\end{corollary}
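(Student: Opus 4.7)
The plan is to transfer the compactness problem on $\SP_{2n}/G_r$ to the already-solved problem on $\SP_{2n}/\GL(2n;\Z)$ via the diffeomorphism $\psi_r$ from Remark~\ref{rem:minkowski-fundamental-domain-and-relatives}. Since $\psi_r$ is a diffeomorphism, $M \subset \SP_{2n}/G_r$ has compact closure if and only if $M' := \psi_r^{-1}(M) \subset \SP_{2n}/\GL(2n;\Z)$ does, so it suffices to show that $M'$ satisfies the hypotheses of Corollary~\ref{cor:Mahler-Selection-Theorem}.

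First I would verify that the first minimum transforms correctly. For $[Y'] \in M'$, set $[Y] := \psi_r([Y']) \in M$. By the identity $m_r(\psi_r([Y'])) = m([Y'])$ recorded in Remark~\ref{rem:minkowski-fundamental-domain-and-relatives}, we have $m([Y']) = m_r([Y]) \geq C_0$, so the first-minimum bound transfers verbatim.

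Next I would handle the determinant. The determinant $\det([Y])$ is well-defined on $\SP_{2n}/G_r$ because every element of $G_r = \delta_r \GL(2n;\Z)\delta_r^{-1}$ has determinant $\pm 1$. Under $\psi_r$, a representative $Y'$ is sent to $\delta_r^{-1} Y' \delta_r^{-1}$, whose determinant equals $(r_1\cdots r_n)^{-2}\det(Y')$. Consequently
\begin{equation*}
\det([Y']) = (r_1\cdots r_n)^{2}\,\det(\psi_r([Y'])) = (r_1\cdots r_n)^2\, \det([Y]) \leq (r_1\cdots r_n)^2\, C_1 =: \tilde C_1 ,
\end{equation*}
and the determinant is uniformly bounded on $M'$.

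Applying Corollary~\ref{cor:Mahler-Selection-Theorem} to $M'$ with constants $C_0$ and $\tilde C_1$ yields that $\overline{M'}$ is compact in $\SP_{2n}/\GL(2n;\Z)$. Pushing forward by the homeomorphism $\psi_r$ gives $\overline{M} = \psi_r(\overline{M'})$ compact in $\SP_{2n}/G_r$. There is no real obstacle here; the only thing to be slightly careful about is the scaling factor $(r_1\cdots r_n)^2$ in the determinant transformation, which however is a fixed constant depending only on $r$ and therefore harmless.
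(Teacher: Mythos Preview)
Your proof is correct and follows exactly the approach the paper intends: the paper's own proof is the single sentence ``This follows from the last corollary via the diffeomorphism $\psi_r$ defined in Remark~\ref{rem:minkowski-fundamental-domain-and-relatives},'' and you have simply unpacked that sentence, correctly tracking the determinant scaling factor $(r_1\cdots r_n)^2$ that the paper leaves implicit.
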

\begin{proof}
	This follows from the last corollary via the diffeomorphism $\psi_r$ defined in Remark~\ref{rem:minkowski-fundamental-domain-and-relatives}.
\end{proof}
	
The following example shows that Corollary~\ref{cor:skewed-r-Mahler-Selection-Theorem} does not remain true if we replace $\SP_{2n}/G_r$ by $\SP_{2n}/\Pi_r$.
	
\begin{example}\label{ex:noncompact-set-in-moduli-space}
	Let $k\in\N_0$ and 
	\begin{equation*}
		Y_k:=\begin{pmatrix}	1 & k & 0 & 0 \\		k & k^2+1 & 0 & 0\\		0 & 0 & 1 & 0\\		0 & 0 & 0 & 1	\end{pmatrix} = \ltrans{\begin{pmatrix}		1 & k & 0 & 0 \\		0 & 1 & 0 & 0 \\		0 & 0 & 1 & 0 \\ 0 & 0 & 0 & 1 \\	\end{pmatrix}} \cdot \Id \cdot \begin{pmatrix}		1 & k & 0 & 0 \\		0 & 1 & 0 & 0 \\		0 & 0 & 1 & 0 \\ 0 & 0 & 0 & 1 \\	\end{pmatrix}\,.
	\end{equation*}
	Obviously, $\det(Y_k)=1$ for all $k\in\N_0$. Also, since $Y_k$ is in the same $\GL(4;\Z)$-orbit as $\Id$, $m(Y_k)=1$ for all $k\in\N_0$. One easily calculates
	\begin{align*}
		Y_k^{-1}\cdot J = \begin{pmatrix}	k^2+1 & -k & 0 & 0 \\	-k & 1 & 0 & 0 \\	0 & 0 & 1 & 0\\		0 & 0 & 0 & 1\\	\end{pmatrix}\cdot J = \begin{pmatrix}		0 & 0 & k^2+1 & -k \\		0 & 0 & -k & 1 \\		-1 & 0 & 0 & 0 \\	0 & -1 & 0 & 0 \\	\end{pmatrix}\,,
	\end{align*}
	so that
	\begin{align*}
		\left(Y_k^{-1}\cdot J \right)^2= \begin{pmatrix}	-k^2-1 & k & 0 & 0 \\	k & -1 & 0 & 0 \\	0 & 0 & -k^2-1 & k\\		0 & 0 & k & -1\\	\end{pmatrix}\,.
	\end{align*}
	The eigenvalues of $\left(Y_k^{-1}\cdot J \right)^2$ are thus the solutions of
	\[
		0=\left((X+k^2+1)(X+1)-k^2\right)^2=\left(X^2+(k^2+2)X+1\right)^2\,.
	\]
	It follows that
	\begin{align*}
		d_1(Y_k)&=2^{-\sfrac12}\sqrt{k^2+2-k\sqrt{k^2+4}}\,,\\
		d_2(Y_k)&=2^{-\sfrac12}\sqrt{k^2+2+k\sqrt{k^2+4}}\,.
	\end{align*}
	The sequence $d_2(Y_k)$ is monotonously and unboundedly increasing in $k$. Since $d_2$ is an invariant of the $\widetilde{\Sp}(4;\R)$-action (see Remarks~\ref{def:d_j-h-and-remarks}\eqref{def:d_j-h-and-remarks:enum:invariance-under-sp}), no two matrices of the family $\{Y_k\}_{k\in\N}$ are in the same $\widetilde{\Sp}(4;\Z)$-orbit. Note that the $d_j(Y)$, $j=1,2$, are continuous in $Y\in\SP_4$ (see Remarks~\ref{def:d_j-h-and-remarks}\eqref{def:d_j-h-and-remarks:enum:continuity}) and descend to continuous functions on $\SP_4/\widetilde{\Sp}(4;\Z)$. Therefore, no subsequence of $\{[Y_k]\}_{k\in\N}$ converges. In particular, boundedness of $\det(\cdot)$ and $m_{(1,1)}(\cdot)$ on a set $M\subset\SP_4/\Pi_{(1,1)}$ is not sufficient for $M$ to be relatively compact.
\end{example}

\begin{lemma}\label{lem:complete-representative-system-moduli-space-fund-group}	
	Let $n\in\N$, $r\in\mathcal{D}_n$ and let $U$ be a complete set of representatives for $G_r / \Pi_r$. Then, for any $C>0$ and any matrix norm $\|\cdot\|:M(2n;\R)\to[0,\infty)$ there are only finitely many $G\in U$ with $\|\ltrans G^{-1} J G^{-1}\|\leq C$.
\end{lemma}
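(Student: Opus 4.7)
My plan is to exhibit the map $\Phi(G):=\ltrans G^{-1}JG^{-1}$ as an essentially injective invariant on $G_r/\Pi_r$ and to show that its image sits in a fixed lattice in $M(2n;\R)$; finiteness will then drop out because bounded subsets of a lattice are finite.

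The first step is to prove that for $G,G'\in G_r$ one has $G\Pi_r=G'\Pi_r$ if and only if $\Phi(G)=\pm\Phi(G')$. For the ``only if'' direction, any $\beta\in\Pi_r\subset\tSp(2n;\R)$ satisfies $\ltrans\beta J\beta=\varepsilon(\beta)J$ with $\varepsilon(\beta)\in\{\pm 1\}$; multiplying on the left by $\ltrans\beta^{-1}$ and on the right by $\beta^{-1}$ gives $\ltrans\beta^{-1}J\beta^{-1}=\varepsilon(\beta)J$, so a direct substitution yields $\Phi(G\beta)=\varepsilon(\beta)\Phi(G)$. For the ``if'' direction, the equation $\Phi(G)=\varepsilon\Phi(G')$ rearranges (multiplying by $\ltrans G$ on the left and $G$ on the right) to $\ltrans{(G'^{-1}G)}J(G'^{-1}G)=\varepsilon J$, so $G'^{-1}G\in\tSp(2n;\R)$; since $G_r$ is a group, $G'^{-1}G\in G_r\cap\tSp(2n;\R)=\Pi_r$, i.e., $G\Pi_r=G'\Pi_r$. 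Consequently $G\mapsto\{\pm\Phi(G)\}$ descends to an injection $G_r/\Pi_r\hookrightarrow M(2n;\R)/\{\pm 1\}$.

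The second step is to observe that $\Phi(G_r)$ lies in a fixed lattice of $M(2n;\R)$. Writing $G=\delta_r A\delta_r^{-1}$ with $A\in\GL(2n;\Z)$, the inverse $G^{-1}=\delta_r A^{-1}\delta_r^{-1}$ has rational entries whose denominators divide $r_n$ (the largest diagonal entry of $\delta_r$, using $r_1\mid\cdots\mid r_n$). Since each entry of $\Phi(G)=\ltrans G^{-1}JG^{-1}$ is a signed sum of products of two entries of $G^{-1}$, the denominators of the entries of $\Phi(G)$ divide $r_n^2$, so $\Phi(G)\in\frac{1}{r_n^2}M(2n;\Z)$ for every $G\in G_r$.

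Finally, because all norms on the finite-dimensional space $M(2n;\R)$ are equivalent and every norm satisfies $\|{-X}\|=\|X\|$, the intersection of the bounded set $\{X:\|X\|\leq C\}$ with the lattice $\frac{1}{r_n^2}M(2n;\Z)$ is finite, and so is the set of unordered pairs $\{\pm X\}$ it determines. By the injection from the first step, the map $U\to M(2n;\R)/\{\pm 1\}$, $G\mapsto\{\pm\Phi(G)\}$, is injective (as $U$ is a system of representatives), and $\{G\in U:\|\Phi(G)\|\leq C\}$ maps into this finite set of pairs, hence is itself finite. The only mildly technical point is the sign bookkeeping in the first step---checking that the $\pm$ ambiguity in $\Phi$ captures exactly the character $\varepsilon:\Pi_r\to\{\pm 1\}$; once this is settled, the proof is a clean combination of group theory and the lattice-discreteness principle.
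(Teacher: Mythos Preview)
Your proof is correct and follows essentially the same approach as the paper's own argument: both show that $G\mapsto\{\pm\Phi(G)\}$ separates cosets in $G_r/\Pi_r$, that $\Phi(G_r)\subset\tfrac{1}{r_n^2}M(2n;\Z)$, and then invoke discreteness of this lattice to conclude finiteness. Your write-up is in fact somewhat more thorough---you verify both directions of the biconditional and explain why the denominators divide $r_n$, whereas the paper only sketches the direction it actually needs and asserts the lattice containment without justification.
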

\begin{proof}
	Firstly, for any $G,H\in U$ with $G\neq H$ we have $\ltrans G^{-1} J G^{-1} \neq \pm \ltrans H^{-1} J H^{-1}$. For if this were not the case, we would have $H^{-1}G=P\in\tSp(2n;\R)$, that is $[G]=[HP]=[H] \in G_r / \Pi_r $ which would be a contradiction. Secondly, if $G \in U$, then the entries of  $\ltrans G^{-1}JG^{-1}$ are elements of $\tfrac 1{r_n^2}\Z$. Since $M(2n;\R)$ is a finite dimensional vector space, all norms are equivalent and we can choose a particular one. Let $\|\cdot\|$ be the maximum norm $\|G\|=\max\{|G_{i,j}|\,|\,1\leq i,j\leq 2n\}$. Then $\|\ltrans G^{-1}JG^{-1}\|\geq r_n^{-2}$ and by the above, $\|\ltrans G^{-1}JG^{-1}-\ltrans H^{-1}JH^{-1}\|\geq r_n^{-2}$ for all $G,H\in U$ with $G\neq H$. The lemma's statement now follows from the fact that the closed norm ball $\{G\in M(2n;R)\,|\,\|G\|\leq C\}$ is compact.	
\end{proof}

\begin{notandrem}\label{nar:eigenvalue-functions}
	\leavevmode
	\begin{enumerate}[(i)]
		\item By the spectral theorem for symmetric matrices the eigenvalues of any matrix $Y\in\SP_n$ are real and positive. We introduce functions $\lambda_1,\ldots,\lambda_n:\SP_n\to (0,\infty)$ such that $\lambda_k(Y)$ is an eigenvalue of $Y$ and $\lambda_1\leq\lambda_2\leq\cdots\leq\lambda_n$. Note that the $\lambda_k$, $1\leq k\leq n$, are continuous functions (see, e.g., \cite[Theorem~1]{zbMATH03214567}).
		\item We furthermore define functions $s_1,\ldots,s_n:\GL(n;\R)\to (0,\infty)$ such that $s_k(G)$, $1\leq k\leq n$, are the \textit{singular values} of $G$ and $s_1 \geq s_2 \geq \cdots \geq s_n$. These functions satisfy $s_k(G)=\sqrt{\lambda_{n-k+1}(\ltrans GG)}$ which means in particular that they are continuous. Care has to be taken though as the $s_k$ are in general not invariant under conjugation. If, however, $\ltrans GG\sim A\in \SP_n$, then $s_k(G)$, $1\leq k\leq n$, are the square roots of the eigenvalues of $A$.
		\item \label{nar:eigenvalue-functions:enum:d_j-s_k-relation} We want to relate the functions $d_j:\SP_{2n}\to\R$, $1\leq j\leq n$, from Definition~\ref{def:d_j-h-and-remarks} to the singular values $s_k:\GL(2n;\R)\to (0,\infty)$, $1\leq k\leq 2n$. For any $G\in\GL(2n;\R)$ we have 
		\begin{align*}
			\ltrans{ \left(\ltrans G^{-1}JG^{-1}\right) }\left(\ltrans G^{-1}JG^{-1}\right)& = -\ltrans G^{-1}JG^{-1}\ltrans G^{-1}JG^{-1}\\
					& \sim -G^{-1}\ltrans G^{-1}JG^{-1}\ltrans G^{-1}J= -\left(\left(\ltrans GG\right)^{-1}J\right)^2\,,
		\end{align*}
		which implies 
		\[
			d_j(\ltrans GG)=s_{k}(\ltrans G^{-1}JG^{-1})
		\] for $k\in\{2n-2j+2,2n-2j+1\}$.
	\end{enumerate}
\end{notandrem}

The following lemma establishes the key matrix inequality required in our main theorem below.

\begin{lemma}\label{lem:moduli-space-special-inequality}
	The inequality
	\begin{equation*}
		d_n\left(\ltrans G G\right)\left(\lambda_{2n}(Y)\right)^{-1}\leq d_n(Y[G])
	\end{equation*}
	holds for all $Y\in\SP_{2n}$ and $G\in\GL(2n;\R)$.
\end{lemma}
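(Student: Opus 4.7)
The strategy is to translate $d_n$ into a spectral norm and then invoke the sub\-multi\-plicat\-ivity of the operator $2$-norm. First, I would write $Y=\ltrans H H$ with $H:=Y^{\sfrac 12}\in\SP_{2n}$; this yields $Y[G]=\ltrans{(HG)}(HG)$. Applying the identity from Notation~\ref{nar:eigenvalue-functions}\eqref{nar:eigenvalue-functions:enum:d_j-s_k-relation} with $j=n$ (so that $k\in\{1,2\}$) gives
\[
d_n(\ltrans G G)=s_1\bigl(\ltrans G^{-1}JG^{-1}\bigr)\qquad\text{and}\qquad d_n(Y[G])=s_1\bigl(\ltrans H^{-1}\ltrans G^{-1}JG^{-1}H^{-1}\bigr).
\]

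With the shorthand $M:=\ltrans G^{-1}JG^{-1}$, the trivial identity
\[
M=\ltrans H\cdot\bigl(\ltrans H^{-1}MH^{-1}\bigr)\cdot H
\]
together with the sub\-multi\-plicat\-ivity of the largest singular value (equivalently, of the operator $2$-norm) yields
\[
s_1(M)\leq s_1(\ltrans H)\,s_1\bigl(\ltrans H^{-1}MH^{-1}\bigr)\,s_1(H)=s_1(H)^2\,s_1\bigl(\ltrans H^{-1}MH^{-1}\bigr),
\]
where I use $s_1(\ltrans H)=s_1(H)$. Since $s_1(H)^2=\lambda_{2n}(\ltrans HH)=\lambda_{2n}(Y)$ by the identity $s_k(G)=\sqrt{\lambda_{n-k+1}(\ltrans GG)}$ from Notation~\ref{nar:eigenvalue-functions}, rearranging produces the desired inequality $d_n(\ltrans G G)\,\lambda_{2n}(Y)^{-1}\leq d_n(Y[G])$.

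No step in this argument looks genuinely subtle. The only point worth being careful about is the correct index matching in Notation~\ref{nar:eigenvalue-functions}\eqref{nar:eigenvalue-functions:enum:d_j-s_k-relation}, namely that the \emph{largest} value $d_n$ pairs with the \emph{largest} singular value $s_1$; everything else is standard linear algebra. As a sanity check, taking $G=\Id$ reduces the claim to $1\leq \lambda_{2n}(Y)\, d_n(Y)$, which is immediate from $d_n(Y)^{-2}=\lambda_{\max}(-(Y^{-1}J)^{-2})^{-1}\leq \lambda_{2n}(Y)^2$ since $Y^{-1}J$ is skew-symmetric with respect to the inner product $Y$.
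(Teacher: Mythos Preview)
Your proof is correct and follows essentially the same route as the paper: both arguments convert $d_n$ into the largest singular value $s_1$ via Notation~\ref{nar:eigenvalue-functions}\eqref{nar:eigenvalue-functions:enum:d_j-s_k-relation}, compare $s_1(M)$ with $s_1(Y^{-\sfrac12}MY^{-\sfrac12})$ for $M=\ltrans G^{-1}JG^{-1}$, and then identify the factors with $\lambda_{2n}(Y)$ and $d_n(Y[G])$. The only cosmetic difference is that the paper cites Bhatia's singular-value inequality \cite[(III.20)]{zbMATH00967931} and specialises to $k=1$, whereas you invoke the sub\-multi\-plicat\-ivity of the operator $2$-norm directly on the factorisation $M=\ltrans H(\ltrans H^{-1}MH^{-1})H$; since $s_{2n}(A)s_1(B)\le s_1(AB)$ for invertible $A$ is equivalent to $s_1(B)\le s_1(A^{-1})s_1(AB)$, the two inequalities are the same in disguise, and your formulation is arguably the more elementary one. (A minor quibble: in your sanity check the exponent in $-(Y^{-1}J)^{-2}$ should presumably be $2$ rather than $-2$, but this does not affect the main argument.)
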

\begin{proof}
	By \cite[p. 72, (III.20)]{zbMATH00967931} one has
	\begin{equation}\label{eqn:bhatia-singular-value-inequality}
		\prod\limits_{j=1}^{k}s_{i_j}(A)\prod\limits_{j=1}^{k}s_{2n-i_j+1}(B)\leq \prod\limits_{j=1}^{k}s_{j}(AB)
	\end{equation}
	for all $A,B\in M(2n;\R)$ and all $1\leq i_1<\cdots<i_k\leq 2n$. We choose $k=1$, $i_1=2n$, $A=Y^{-\sfrac 12}$ and $B=\ltrans G^{-1} J G^{-1} Y^{-\sfrac 12}$ and obtain
	\begin{equation*}
		s_{2n}\left(Y^{-\sfrac 12}\right)s_1\left(\ltrans G^{-1}JG^{-1}Y^{-\sfrac 12}\right)\leq s_1\left(Y^{-\sfrac 12}\ltrans G^{-1}JG^{-1}Y^{-\sfrac 12}\right)\,.
	\end{equation*}
	We apply \eqref{eqn:bhatia-singular-value-inequality} again to the second factor of the left hand side of this inequality, this time with $A=\ltrans  G^{-1}JG^{-1}$, $B=Y^{-\sfrac 12}$, $k=1$ and $i_1=1$, which yields
	\begin{equation}\label{eqn:d_n-inequality-0}
		\left(s_{2n}\left(Y^{-\sfrac 12}\right)\right)^2s_1\left(\ltrans G^{-1}JG^{-1}\right)\leq s_1\left(Y^{-\sfrac 12}\ltrans G^{-1}JG^{-1}Y^{-\sfrac 12}\right)\,.
	\end{equation}
	Now $\left(s_{2n}\left(Y^{-\sfrac 12}\right)\right)^2=s_{2n}\left(Y^{-1}\right)=\lambda_1\left(Y^{-1}\right)=\left(\lambda_{2n}(Y)\right)^{-1}$. Furthermore, one has \linebreak $s_1\left(\ltrans G^{-1}JG^{-1}\right) = d_n(\ltrans GG)$ by Remark~\ref{nar:eigenvalue-functions}\eqref{nar:eigenvalue-functions:enum:d_j-s_k-relation}. Together, this shows that the left hand side of \eqref{eqn:d_n-inequality-0} matches the left hand side of the inequality in the statement of the lemma. We have a look at the right hand side of \eqref{eqn:d_n-inequality-0}. Let $H:=Y^{\sfrac 12}G$. By Remark~\ref{nar:eigenvalue-functions}\eqref{nar:eigenvalue-functions:enum:d_j-s_k-relation} we have
	\begin{multline*}
			s_1\left(Y^{-\sfrac 12}\ltrans G^{-1}JG^{-1}Y^{-\sfrac 12}\right) = s_1\left(\ltrans H^{-1}JH^{-1}\right) = d_n\left(\ltrans H H\right)=d_n\left(\ltrans GY^{\sfrac 12}Y^{\sfrac 12}G\right)\\=d_n\left(Y[G]\right),
	\end{multline*}
	which finishes the proof of the stated inequality.
\end{proof}

\begin{theorem}\label{thm:compact-sets-in-moduli-space}
	Let $M\subset \SP_{2n} / \Pi_r$. Assume that there are positive constants $C_0,C_1$ and $C_2$ such that $m_r([Y])\geq C_0$, $\det([Y])\leq C_1$ and $d_n([Y])\leq C_2$ for all $[Y]\in M$. Then, $M$ has compact closure in $ \SP_{2n} / \Pi_r$.
\end{theorem}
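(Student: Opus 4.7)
The plan is to lift $M$ to $\SP_{2n}$, exhibit $\Pi_r$-representatives for its elements inside a relatively compact subset, and then invoke continuity of the projection $\SP_{2n}\to\SP_{2n}/\Pi_r$. Following the strategy from the introduction, I would use the fundamental domain $\mathscr M_{2n,r}$ for the $G_r$-action (Remark~\ref{rem:minkowski-fundamental-domain-and-relatives}) together with a set $U$ of representatives for the coset space $G_r/\Pi_r$; then $\bigcup_{G\in U}\mathscr M_{2n,r}[G]$ is a fundamental domain for the $\Pi_r$-action, so every $[Y]\in M$ has a $\Pi_r$-representative of the form $Z[G]$ with $Z\in\mathscr M_{2n,r}$ and $G\in U$.

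First I would confine the $Z$'s to a fixed compact set $K\subset\mathscr M_{2n,r}$. Since $m_r$ is $G_r$-invariant (Notation and Remarks~\ref{rem:minkowski-fundamental-domain}) and every element of $G_r$ has determinant $\pm 1$, the hypothesis yields $m_r(Z)\geq C_0$ and $\det Z\leq C_1$. The Hermite-Mahler theorem (Theorem~\ref{thm:hermite-mahler}), transported to $\mathscr M_{2n,r}$ via the diffeomorphism $\Psi_r$, then places all admissible $Z$ in such a compact $K$. Setting $\Lambda:=\sup_{Z\in K}\lambda_{2n}(Z)$, which is finite by continuity of $\lambda_{2n}$, gives a uniform upper bound on the largest eigenvalue of $Z$.

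Second, I would restrict the $G$'s to a finite subset $U_0\subset U$ using the key matrix inequality. Because $d_n$ is $\widetilde{\Sp}(2n;\R)$-invariant (Remarks~\ref{def:d_j-h-and-remarks}\eqref{def:d_j-h-and-remarks:enum:invariance-under-sp}) and hence $\Pi_r$-invariant, the assumption $d_n([Y])\leq C_2$ gives $d_n(Z[G])\leq C_2$. Applying Lemma~\ref{lem:moduli-space-special-inequality} with $Y$ replaced by $Z$,
\[
d_n(\ltrans G G)\leq\lambda_{2n}(Z)\cdot d_n(Z[G])\leq\Lambda C_2.
\]
By Remark~\ref{nar:eigenvalue-functions}\eqref{nar:eigenvalue-functions:enum:d_j-s_k-relation}, the left hand side equals $s_1(\ltrans G^{-1}JG^{-1})$, the spectral norm of $\ltrans G^{-1}JG^{-1}$. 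Lemma~\ref{lem:complete-representative-system-moduli-space-fund-group} then restricts the admissible $G$'s to a finite set $U_0\subset U$.

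Putting the pieces together, the chosen representatives lie in the compact set $\bigcup_{G\in U_0}K[G]$, whose image in $\SP_{2n}/\Pi_r$ contains $M$; so $M$ is relatively compact. The main obstacle — translating a bound on the $\Pi_r$-invariant quantity $d_n(Y)$ into a geometric bound on the coset representative $G$ — is exactly what Lemma~\ref{lem:moduli-space-special-inequality} provides; the rest is bookkeeping, most notably keeping track of which of the invariants $m_r$, $\det$, $d_n$ are preserved by which group ($G_r$ versus the smaller $\Pi_r$).
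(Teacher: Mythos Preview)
Your proposal is correct and follows essentially the same route as the paper's proof: both reduce to a precompact set $K\subset\mathscr M_{2n,r}$ via Hermite--Mahler (the paper packages this as Corollary~\ref{cor:skewed-r-Mahler-Selection-Theorem}), apply Lemma~\ref{lem:moduli-space-special-inequality} to bound $d_n(\ltrans GG)=s_1(\ltrans G^{-1}JG^{-1})$ uniformly, and then invoke Lemma~\ref{lem:complete-representative-system-moduli-space-fund-group} to cut $U$ down to a finite set. The only cosmetic difference is that the paper phrases the first step in terms of the projections $\eta_r,\pi_r,p_r$ rather than explicit $\Pi_r$-representatives $Z[G]$.
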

\begin{proof}
We denote by $\pi_r:\SP_{2n} \to \SP_{2n}/G_r$, $p_r:\SP_{2n}\to\SP_{2n} / \Pi_r$ and $\eta_r:\SP_{2n}/\Pi_r\to \SP_{2n}/G_r$ the canonical projections. Note that $\eta_r\circ p_r = \pi_r$.\\
By Corollary~\ref{cor:skewed-r-Mahler-Selection-Theorem}, the set $\eta_r(M)$ is precompact. There is thus a precompact set $K\subset \mathscr M_{2n,r}$, where $\mathscr M_{2n,r}$ is the fundamental domain for $\SP_{2n}/\Pi_r$ defined by \eqref{eqn:definition-M_n,r-fundamental-domain}, with $\pi_r(K)=\eta_r(M)$. Consequently, one has
\begin{align}
	M\subseteq \bigcup_{G\in U} p_r\left(\ltrans GKG\right)\,,
\end{align}
where $U$ is a full set of representatives of $G_r/\Pi_r$.
The function $\lambda_{2n}$ is continuous and therefore takes its maximum $\overline{M}>0$ on the closure $\overline K$ of $K$. By Lemma~\ref{lem:moduli-space-special-inequality} one has 
\begin{align}\label{eqn:dn-larger-and-larger}
d_n\left(\ltrans GYG\right)\geq \left(\lambda_{2n}(Y)\right)^{-1}d_n\left(\ltrans GG\right)\geq \overline{M}^{-1}d_n\left(\ltrans GG\right)\, \text{ for all } Y\in K, G\in U\,.
\end{align}
 Let $\|\cdot\|_2$ be the spectral norm, that is, $\|G\|_2=s_1(G)$. Also, recall that $d_n\left(\ltrans GG\right)=s_1\left(\ltrans G^{-1}JG^{-1}\right)$ by Remark~\ref{nar:eigenvalue-functions}\eqref{nar:eigenvalue-functions:enum:d_j-s_k-relation}. Then, by Lemma~\ref{lem:complete-representative-system-moduli-space-fund-group} there are only finitely many $G\in U$ with $d_n\left(\ltrans GG\right)=s_1\left(\ltrans G^{-1}JG^{-1}\right)=\|\ltrans G^{-1}JG^{-1}\|_2\leq C_2{\overline M}$. Let $G_1,\ldots,G_N$ be those $G\in U$. Because of inequality~\eqref{eqn:dn-larger-and-larger} we have
 \begin{align*}
 	d_n\left(\ltrans G Y G\right) >C_2 \text{ for all } Y\in K \text{ and all } G\in U\ba\{G_1,\ldots,G_N\}\,,
 \end{align*}
  which in turn implies, by the assumption on $d_{n|M}$, that
\begin{align*}
	M\subseteq \bigcup_{j=1}^N p_r\left(\ltrans G_j K G_j\right)\,.
\end{align*}
The right hand side is a finite union of precompact sets and hence precompact. The set $M$ has thus compact closure, as claimed.
\end{proof}

We can now characterise the relatively compact sets of the moduli spaces $\mathcal M_n^r$ and $\mathcal M_n^{r,HT}$.

\begin{corollary}\label{cor:compact-sets-in-M_n,r}
	Let $n\in\N$, $r\in\mathcal D_n$ and $M\subset \mathcal M^r_{n}$. Assume that there are positive constants $C_0, C_1, C_2>0$ and a compact interval $I\subset (0,\infty)$ such that $g\in I$, $m_r(h)\geq C_0$, $\det(h)\leq C_1$ and $d_n(h)\leq C_2$ for all $[(h,g)]\in M$. Then $M$ has compact closure.
\end{corollary}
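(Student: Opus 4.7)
The plan is to reduce Corollary~\ref{cor:compact-sets-in-M_n,r} to Theorem~\ref{thm:compact-sets-in-moduli-space} by observing that the $\Pi_r$-action on $\SP_{2n}\times(0,\infty)$ leaves the $g$-coordinate fixed, so that the moduli space splits as $\mathcal M_n^r=(\SP_{2n}/\Pi_r)\times(0,\infty)$. To see this I would compute the pullback of a normalised metric $\m=(h,g)$, viewed as $\diag(h,g)\in\SP_{2n+1}$, by an element $\beta\in\Pi_r\subseteq\tSp(2n;\R)$: under the embedding $\beta\mapsto\diag(\beta,\varepsilon(\beta))\in\GL(2n+1;\R)$ recalled in Section~\ref{sec:mflds}, the pullback equals $\diag(h[\beta],\varepsilon(\beta)^2 g)=(h[\beta],g)$ since $\varepsilon(\beta)=\pm 1$. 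Hence $\Pi_r$ acts trivially on the second factor and the splitting holds.

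With the splitting in hand, let $\pi_1:\mathcal M_n^r\to\SP_{2n}/\Pi_r$ and $\pi_2:\mathcal M_n^r\to(0,\infty)$ be the two projections. The image $\pi_1(M)$ consists of classes $[h]$ with $m_r([h])\geq C_0$, $\det([h])\leq C_1$ and $d_n([h])\leq C_2$, so Theorem~\ref{thm:compact-sets-in-moduli-space} yields that $\pi_1(M)$ has compact closure in $\SP_{2n}/\Pi_r$. On the other hand $\pi_2(M)\subseteq I$ lies in the prescribed compact interval. Consequently $M\subseteq\overline{\pi_1(M)}\times I$ sits inside a compact set, and therefore $M$ has compact closure in $\mathcal M_n^r$.

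The only point requiring any real thought is the triviality of the $\Pi_r$-action on the $g$-coordinate; I do not anticipate a genuine obstacle here, as it is a one-line block-diagonal computation using $\varepsilon(\beta)^2=1$. Once that splitting is recorded, the corollary is a straightforward packaging of Theorem~\ref{thm:compact-sets-in-moduli-space} with the trivial compactness of $I$.
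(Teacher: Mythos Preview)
Your proposal is correct and is exactly the argument the paper has in mind: the paper states Corollary~\ref{cor:compact-sets-in-M_n,r} without proof, treating it as an immediate consequence of Theorem~\ref{thm:compact-sets-in-moduli-space} together with the product decomposition $\mathcal M_n^r\cong(\SP_{2n}/\Pi_r)\times(0,\infty)$, which you have spelled out. The block-diagonal computation showing that $\Pi_r$ fixes the $g$-coordinate is indeed the only nontrivial observation, and your verification via $\varepsilon(\beta)^2=1$ is correct.
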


\begin{corollary}\label{cor:compact-sets-of-heisenberg-type-moduli}
	Let $n\in\N$ and $r\in\mathcal D_n$. Then any set $M\subset \mathcal M_{n}^{r,HT}$ for which there exists a constant $C_0> 0$ and a compact interval $I\subset (0,\infty)$ such that $m_r([h])\geq C_0$ and $g\in I$ for all $[(h,g)]\in M$ is relatively compact.
\end{corollary}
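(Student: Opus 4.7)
The plan is to reduce this corollary to the previous one, Corollary~\ref{cor:compact-sets-in-M_n,r}, by extracting the two missing bounds (on $\det(h)$ and on $d_n(h)$) from the Heisenberg-type condition together with $g\in I$. Recall from Corollary~\ref{cor:Heisenberg-type-Heisenberg-manifolds} that membership in $\mathcal{M}_n^{r,HT}$ is equivalent to $d_k(h)=g^{-\sfrac 12}$ for all $1\leq k\leq n$, so the point is that fixing the conformal class of the fibre via $g\in I$ automatically pins down the $d_k(h)$.

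First I would write $g_0:=\min I>0$ and $g_1:=\max I$, both finite and positive since $I$ is compact in $(0,\infty)$. Then for every $[(h,g)]\in M$ the Heisenberg-type condition gives immediately
\[
d_n(h)=g^{-\sfrac 12}\leq g_0^{-\sfrac 12}=:C_2.
\]
Next I would produce a bound on $\det(h)$. The eigenvalues of $h^{-1}J$ are $\pm ig^{-\sfrac 12}$, each of multiplicity $n$, so $\det(h^{-1}J)=(ig^{-\sfrac 12})^n(-ig^{-\sfrac 12})^n=g^{-n}$; since $\det(J)=1$ this yields $\det(h)=g^n\leq g_1^n=:C_1$. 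This is the only computation in the argument, and it is routine; I do not foresee an actual obstacle.

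With the constants $C_0$ (given), $C_1$, $C_2$ and the interval $I$ in hand, Corollary~\ref{cor:compact-sets-in-M_n,r} applies and shows that $M$ has compact closure in $\mathcal{M}_n^r$. To conclude that $M$ is relatively compact in $\mathcal{M}_n^{r,HT}$, I would invoke the last assertion of Corollary~\ref{cor:Heisenberg-type-Heisenberg-manifolds}: $\mathcal{M}_n^{r,HT}$ is closed in $\mathcal{M}_n^r$, so the closure of $M$, taken in $\mathcal{M}_n^r$, is automatically contained in $\mathcal{M}_n^{r,HT}$, and hence is also its closure there. This finishes the argument.
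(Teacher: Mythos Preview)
Your proposal is correct and follows essentially the same route as the paper: use the Heisenberg-type condition $d_k(h)=g^{-\sfrac 12}$ together with $g\in I$ to manufacture the missing bounds $d_n(h)\leq C_2$ and $\det(h)=g^n\leq C_1$, apply Corollary~\ref{cor:compact-sets-in-M_n,r}, and then invoke the closedness of $\mathcal M_n^{r,HT}$ in $\mathcal M_n^r$ from Corollary~\ref{cor:Heisenberg-type-Heisenberg-manifolds}. The paper's proof is just a terser version of exactly this argument.
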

\begin{proof}
	Let $M\subset \mathcal M_{n}^{r,HT}$ be a set for which there exist constants as stated. Any $[(h,g)]\in\mathcal M_{n}^{r,HT}$ satisfies $d_k(h)=g^{-\sfrac 12}$ for all $1\le k\le n$ and hence $\det h = det(J^{-1}h)=g^n$. It follows from the last corollary that $M$ has compact closure in $\mathcal M_n^{r}$.	By Corollary~\ref{cor:Heisenberg-type-Heisenberg-manifolds}, $\mathcal M_{n}^{r,HT}$ is a closed subspace of $\mathcal M_{n}^r$. Hence, the closure of $M$ is still contained in $\mathcal M_{n}^{r,HT}$.
\end{proof}

\begin{remark}\label{rem:sectional-curvatures}
	Let $(\Gamma^r\ba H_n,\m)$ be a normalised Heisenberg manifold with $\m=(h,g)$. By \eqref{eqn:sec-curvatures} and \eqref{eqn:Kaplan-j-matrix} we have the following inequality for the sectional curvatures $K$,
	\[
		K \leq g^{-1}d_n^2(h)\,.
	\]
	 Hence one can equivalently replace the uniform bound on $d_n$ in Corollary \ref{cor:compact-sets-in-M_n,r} by a uniform upper bound on the sectional curvatures. 
\end{remark}

\end{document}